\newtheorem{theorem}{Theorem}
\newtheorem{proposition}[theorem]{Proposition}
\newtheorem{question}[theorem]{Question}
\newcommand{\g}{{\rm \gamma}}
\begin{document}

\title{Domination versus independent domination in regular graphs}

\author
{ Martin Knor\thanks{Slovak University of Technology in Bratislava,
Faculty of Civil Engineering, Department of Mathematics, Bratislava,
Slovakia. E-Mail: \texttt{knor@math.sk}},
\quad Riste
\v{S}krekovski\thanks{FMF, University of Ljubljana \& Faculty of Information Studies, Novo mesto \& FAMNIT, University of Primorska, Koper, Slovenia.  
E-Mail: \texttt{skrekovski@gmail.com}},\quad Aleksandra Tepeh\thanks{Faculty
of Information Studies, Novo mesto \& Faculty of Electrical
Engineering and Computer Science, University of Maribor, Slovenia.
E-Mail: \texttt{aleksandra.tepeh@gmail.com}} }

\maketitle

\begin{abstract}

A set $S$ of vertices in a graph $G$ is a dominating set if every vertex of $G$ is in $S$ or is adjacent to a vertex in $S$. If, in addition, $S$ is an independent set, then $S$ is an independent dominating
set. The domination number $\g(G)$ of $G$ is the minimum cardinality of a dominating set
in $G$, while the independent domination number $i(G)$ of $G$ is the minimum cardinality
of an independent dominating set in $G$. We prove that for all integers $k \geq 3$ it holds that if $G$ is a connected $k$-regular graph, then $\frac{i(G)}{\g(G)} \leq \frac{k}{2}$, with equality if and only if $G = K_{k,k}$. The result was previously known only for $k\leq 6$. This affirmatively answers a question of Babikir and Henning from \cite{babi}.

\end{abstract}

\noindent \textbf{Keywords}: domination; independent domination; extremal graph

\tikzset{My Style/.style={draw, circle, fill=black,scale=0.3}} 


\section{Introduction}

Given a graph $G = (V,E)$ an
\textit{independent} set is a subset of vertices $U \subseteq V$, such that no two vertices in $U$ are adjacent. An independent set is \textit{maximal} if no vertex can be added without violating independence. An independent set of maximum cardinality is called a \textsl{maximum independent} set.
A set $S$ of vertices in a graph $G$ is a \textit{dominating} set if every
vertex of $G$ is in $S$ or is adjacent to a vertex in $S$.
If, in addition, $S$ is an independent set, then $S$ is an independent dominating set.
The \textit{domination number} of $G$, denoted $\g(G)$, is the minimum cardinality
of a dominating set of $G$.
The \textit{independent domination number} of $G$, denoted $i(G)$, is the minimum cardinality of an
independent dominating set in $G$.
Note that an independent set of vertices in a graph $G$ is a dominating set of $G$ if and only if it is
a maximal independent set.
Therefore, $i(G)$ is equal to the minimum cardinality of a maximal
independent set of vertices in $G$.
The object of study in this paper are $k$-regular graphs,
i.e. such that every vertex has degree $k$. 

Dominating and independent dominating sets have been studied extensively in the literature; see for example the books \cite{Hay1,Hay2} and a survey \cite{GHsur}. In early studies authors considered the difference between $\g(G)$ and $i(G)$, \cite{bare,cock-hed,GHLS,kost}. 
In  \cite{GHLS} the authors initiated the study of the ratio $\frac{i(G)}{\g(G)}$. They showed that this ratio is at most $\frac{3}{2}$ for connected cubic graphs $G$, with
equality if and only if $G = K_{3,3}$. Southey and Henning \cite{SH} proved that the $\frac{3}{2}$-ratio can
be strengthened to a $\frac{4}{3}$-ratio if $K_{3,3}$ is excluded. In \cite{O} O and West constructed an infinite family of connected cubic graphs $G$ such that $\frac{i(G)}{\g (G)} = \frac{5}{4}$. A question of determining whether $\frac{4}{3}$-ratio from the above mentioned result of Southey and Henning can be improved to a $\frac{5}{4}$-ratio if finitely many graphs are forbidden, remains open.

The ratio of the independent domination number to the domination number for general graphs was studied by Furuya et al.~\cite{furuya} who showed that for a graph $G$ this ratio is at most $\Delta(G)-2\sqrt{\Delta(G)}+2$, where $\Delta(G)$ denotes the maximum degree of $G$.

\medskip

In this paper we give the affirmative answer to the following question from \cite{babi}, with which the above sharp bound of Furuya et al.~is also improved in the case of connected $k$-regular graphs.

\begin{question}
\label{Q}
Is it true that for all integers $k \geq 3$ if $G$ is a connected $k$-regular graph, then $\frac{i(G)}{\g(G)} \leq \frac{k}{2}$, with equality if and only if $G = K_{k,k}$?
\end{question}

As mentioned above, Question~{\ref{Q}} was already answered affirmatively for
$k=3$ in \cite{GHLS}, and for $k\in \{4,5,6\}$ in \cite{babi}.


\section{Proof of the main theorem}

In the proof of our theorem we use $G[A]$ to denote the subgraph of $G$ induced by a vertex set $A\subseteq V$, and we also use
an old result by Rosenberg \cite{rose64}.

\begin{proposition}\label{rose}
If $G$ is a regular graph of order $n$ with no isolated vertex, then 
$i(G) \leq \frac{n}{2}$.
\end{proposition}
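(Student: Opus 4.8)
The plan is to establish the cleaner underlying fact that \emph{every} independent set of a $k$-regular graph with $k\ge 1$ has at most $n/2$ vertices, and then to read off the proposition. Since $i(G)$ equals the minimum cardinality of a maximal independent set, I would fix a maximal independent set $S$ realizing this minimum, so that $|S|=i(G)$ and, in particular, $S$ is independent. It then suffices to show $|S|\le n/2$, because $G$ is regular, say $k$-regular, and the bound would apply to $S$.

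The key step is a double count of the edges of $G$ joining $S$ to $V\setminus S$. Because $S$ is independent, $G[S]$ has no edges, so every one of the $k|S|$ edge-ends incident to vertices of $S$ (here I use that $G$ is $k$-regular) belongs to an edge crossing to $V\setminus S$; hence the cut between $S$ and $V\setminus S$ contains exactly $k|S|$ edges. Counting the same edges from the other side, each vertex of $V\setminus S$ has degree $k$ and so lies on at most $k$ crossing edges, giving at most $k|V\setminus S|=k(n-|S|)$ crossing edges in total. Comparing the two counts yields $k|S|\le k(n-|S|)$.

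The only place the hypotheses genuinely enter is the final cancellation: the assumption that $G$ has no isolated vertex rules out the degenerate case $k=0$ and guarantees $k\ge 1$, so I may divide by $k$ to obtain $|S|\le n-|S|$, that is $|S|\le n/2$, and therefore $i(G)=|S|\le \frac{n}{2}$. There is essentially no hard part here — the whole argument is a single application of double counting across a cut — so the main thing to watch is the logical bookkeeping: invoke $k$-regularity together with independence on the $S$-side to pin down the crossing count exactly, use only the crude degree bound on the $V\setminus S$-side, and reserve the no-isolated-vertex hypothesis for the division by $k$. No appeal to maximality of $S$ beyond identifying $|S|$ with $i(G)$ is needed, since the cut inequality holds for \emph{any} independent set.
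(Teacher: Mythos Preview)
The paper does not actually prove Proposition~\ref{rose}; it is quoted as ``an old result by Rosenberg'' and simply cited. So there is no in-paper argument to compare against.

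Your proof is correct and is in fact the standard one. The double count of the edges across the cut $(S,V\setminus S)$ gives $k|S|\le k(n-|S|)$ for any independent set $S$ in a $k$-regular graph, and the hypothesis $k\ge 1$ (equivalently, no isolated vertices) lets you divide by $k$. You even note the slightly stronger consequence, that $\alpha(G)\le n/2$, which is exactly Rosenberg's theorem. One tiny remark: you could shorten the write-up by dispensing with the maximality discussion entirely and just saying that $i(G)\le \alpha(G)\le n/2$, since your cut argument bounds \emph{every} independent set; the detour through a specific maximal independent set realizing $i(G)$ is harmless but unnecessary.
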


\begin{theorem} For $k \geq 3$, if $G$ is a connected $k$-regular graph, 
then $$\frac{i(G)}{\g(G)} \leq \frac{k}{2},$$ 
with equality if and only if $G = K_{k,k}$.
\end{theorem}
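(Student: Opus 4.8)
\medskip
\noindent\textbf{Proof strategy.}

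First the ``if'' direction, which is routine: in $K_{k,k}$ no vertex is dominating while one vertex together with any vertex of the opposite side is, so $\g(K_{k,k})=2$; and since opposite sides are completely joined, the only maximal independent sets of $K_{k,k}$ are its two sides, so $i(K_{k,k})=k$, giving $i(K_{k,k})/\g(K_{k,k})=k/2$. The work is in the inequality and in showing that equality forces this graph.

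For the inequality, fix a minimum dominating set $D$ and set $\g=|D|=\g(G)$, $n=|V(G)|$, and let $e$ be the number of edges of $G[D]$. Since $\sum_{v\in V}|N[v]\cap D|=(k+1)\g$ and (as $D$ dominates) this sum is at least $n+\sum_{v\in D}(|N[v]\cap D|-1)=n+2e$, we get $n\le(k+1)\g-2e$. If $e\ge\g/2$ then $n\le k\g$, and Proposition~\ref{rose} (applicable since $G$ is $k$-regular with $k\ge3$, hence has no isolated vertex) yields $i(G)\le n/2\le\tfrac{k}{2}\g$. If $e\le\g/2$, then removing one endpoint of each edge of $G[D]$ leaves an independent set, so $G[D]$ has an independent set of size at least $\g-e\ge\g/2$; extend it to a maximal independent set $I$ of $G[D]$, so $|I|\ge\g/2$ and $D\subseteq N[I]$. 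With $U=V\setminus N[I]$ we have $U\subseteq V\setminus D$, every vertex of $U$ has a neighbour in $D\setminus I$ but none in $I$, and every $r\in D\setminus I$ has a neighbour in $I$ and hence at most $k-1$ neighbours outside $D$; thus $|U|\le(k-1)(\g-|I|)$. For a maximal independent set $I_U$ of $G[U]$, the set $I\cup I_U$ is independent (no edges between $I$ and $U$) and dominating (since $N[I]\cup U=V$), so
$$i(G)\ \le\ |I|+|I_U|\ \le\ |I|+(k-1)(\g-|I|)\ =\ (k-1)\g-(k-2)|I|\ \le\ \tfrac{k}{2}\g,$$
where the last step uses $|I|\ge\g/2$ and $k\ge3$.

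For the equality case, suppose $i(G)=\tfrac{k}{2}\g$ and trace back the tightness, which must occur for every minimum dominating set $D$. One is forced into the case $e\le\g/2$ with $|I|=\g/2$ and $\alpha(G[D])=\g-e$; the latter forces $\Delta(G[D])\le1$, and together with $\alpha(G[D])=\g/2$ this forces $G[D]$ to be a perfect matching, say with edges $u_ir_i$ ($1\le i\le m$, $m=\g/2$), and we may take $I=\{u_1,\dots,u_m\}$. Tightness of the remaining estimates (together with Proposition~\ref{rose}) forces $n=k\g$, every vertex of $V\setminus D$ to have exactly one neighbour in $D$, $G[U]$ to be edgeless, and $|U|=(k-1)m$. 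Moreover $\{r_1,\dots,r_m\}$ is an independent set dominating $D\cup U$, so $i(G)\le m+\alpha(G[W])$ with $W=N[I]\setminus D$, $|W|=(k-1)m$; since $i(G)=km$ this forces $G[W]$ edgeless too. Hence $G[U\cup W]$ is a $(k-1)$-regular bipartite graph with parts $U$, $W$, and $G$ is this graph together with the vertices $u_i$ (joined to a $(k-1)$-subset $W_i\subseteq W$) and $r_i$ (joined to a $(k-1)$-subset $U_i\subseteq U$) and the matching $u_ir_i$, where the $W_i$ partition $W$ and the $U_i$ partition $U$.

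I expect the final step---deducing $m=1$, and hence $n=2k$ and $G\cong K_{k,k}$---to be the main obstacle, and it is where connectedness must be used. The plan is to show that if $m\ge2$ one can choose a small maximal independent set of $G[U\cup W]$ that avoids at least one of the parts $U_i$ or $W_i$; the corresponding ``missed'' vertex $r_i$ or $u_i$ can then be added at unit cost, producing a maximal independent set of $G$ of size strictly below $km$ and contradicting $i(G)=km$---connectedness being precisely what excludes the degenerate configuration in which $G[U\cup W]$ (and then $G$) would split off a piece indexed by a single $i$. Alternatively, for $m\ge2$ one may try to build a dominating set of size less than $2m$, contradicting minimality of $\g$. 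Once $m=1$, the rigid structure above and $k$-regularity leave no choice but $G=K_{k,k}$.
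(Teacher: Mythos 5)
Your proof of the inequality $i(G)/\g(G)\le k/2$ is correct, and it is essentially the paper's argument: you split on whether $G[D]$ has at least or fewer than $\g(G)/2$ edges, use Rosenberg's bound $i(G)\le n/2$ in the first case, and in the second case build an independent dominating set from a large maximal independent subset $I$ of $G[D]$ together with a maximal independent set of the vertices not dominated by $I$. Your derivation of the extremal structure is also sound as far as it goes: $e=\g/2$, $G[D]$ a perfect matching $\{u_ir_i\}$, $n=k\g$, every vertex outside $D$ with exactly one neighbour in $D$, and $G[U\cup W]$ a $(k-1)$-regular bipartite graph with the parts partitioned as $\bigcup U_i$ and $\bigcup W_i$.

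The genuine gap is the step you yourself flag as the ``main obstacle'': deducing $m=1$. This is not a routine afterthought but the heart of the equality characterization, and your sketch does not yet work. Note that the disjoint union of $m$ copies of $K_{k,k}$ realizes \emph{exactly} the structure you derived, with $i=km=\tfrac{k}{2}\g$; there, \emph{every} maximal independent set has size precisely $km$. So any argument must genuinely locate and exploit an edge joining $N(u_i)\cup N(r_i)$ to $N(u_j)\cup N(r_j)$ for some $i\ne j$, which exists by connectedness when $m\ge2$. Your first proposed route --- take a maximal independent set $J$ of $G[U\cup W]$ with $|J|\le(k-1)m$ and extend --- fails for bad choices of $J$: for instance $J=W$ is such a set, and it extends only to $W\cup\{r_1,\dots,r_m\}$ of size exactly $km$. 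One must instead \emph{construct} a specific small maximal independent set around a ``crossing'' edge; the paper does this by showing first that $u_i$ and $r_i$ share no neighbour, then that no vertex of $N(u_1)\cup N(r_1)$ has a neighbour outside that set (via two explicit independent dominating sets of size $<\tfrac{k}{2}\g$, one for a crossing edge into $A$ and one for a crossing edge into $B$), whence $N(u_1)\cup N(r_1)$ induces a component and connectedness gives $\g(G)=2$. Until you carry out an argument of this kind, the equality case is not proved. (Your handling of $m=1\Rightarrow G=K_{k,k}$ is fine.)
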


\begin{proof} 
Let $G=(V,E)$ be a connected $k$-regular graph, $k \geq 3$. Let $A$ be a dominating set in $G$ with $|A|=\g(G)$, and $B=V\setminus A$. 
We distinguish two cases with respect to the number of edges in $G[A]$, which we denote by $s$.

\medskip

{\bf Case 1:} $s \geq \frac{\g(G)}{2}$. Denote by $e$ the number of edges having one end-vertex in $A$ and the other in $B$. Since $G$ is $k$-regular, we derive
$$
e=k\cdot |A|-2s=k\g(G)-2s \leq k\g(G)-\g(G)=(k-1)\g(G).
$$
Since $A$ is a dominating set this readily implies that $|B|\leq e \leq (k-1)\g(G)$.
We now estimate $n=|A|+|B|\leq \g(G)+(k-1)\g(G)=k\g(G)$, and using Proposition~\ref{rose},
we derive
$$
\frac{i(G)}{\g(G)} \leq \frac{n/2}{n/k} =\frac{k}{2}.
$$

\medskip

{\bf Case 2:} $s<\frac{\g(G)}{2}$. Let $A'$ denote a maximum independent set in $G[A]$. Clearly, $|A'|\geq |A|-s=\g(G)-s$. Let $|A'|= \g(G)-s+x$ for some $x\geq 0$. Then $A\setminus A'$ contains $s-x$ vertices which we denote by $b_1,b_2,\ldots, b_{s-x}$. Let $B_i$ be the set of neighbors of $b_i$ in $B$, $i\in \{1,2,\ldots,s-x\}$, and let 
$B'= B_1\cup B_2\cup \cdots\cup B_{s-x}$.
Then $|B'|\leq (s-x)(k-1)$, and since $k\geq 3$ we derive
$$\begin{array}{rcl}
|A'\cup B'| & = & |A'|+|B'|\\
            & \leq & \g(G)-s+x + (s-x)(k-1)\\
            & = & \g(G)-2s+sk+x(2-k)\\
						& \leq & \g(G)+(k-2)s.
\end{array}$$

\noindent Our next aim is to show that $A'\cup B'$ contains an independent dominating set of $G$. Let $B''$ be the set of vertices in $B'$ that have a neighbor in $A'$, and $C$ an independent dominating set in the subgraph of $G$ induced by the set $B'\setminus B''$. It is straightforward to verify that $I=A'\cup C$ is an independent dominating set in $G$.
Therefore we obtain
$$
i(G)\leq |I|= |A'|+|C| \leq |A'|+|B'| \leq  \g(G)+(k-2)s.
$$
Recall that $s<\frac{\g(G)}{2}$.
Now we consider the following cases with respect to the parity of $\g(G)$.
If $\g(G)$ is even, then $s\leq \frac{\g(G)}{2}-1$, and we have
$$
i(G)\leq \g(G)+(k-2)\left(\frac{\g(G)}{2}-1\right)=\frac{k}{2}\g(G)+2-k< \frac{k\g(G)}{2},
$$
and therefore 
$$
\frac{i(G)}{\g(G)} < \frac{\frac{k\g(G)}{2}
}{\g(G)} =\frac{k}{2}.
$$

\noindent If $\g(G)$ is odd, then $s\leq \frac{\g(G)-1}{2}$, and we obtain
$$
i(G)\leq \g(G)+(k-2)\frac{\g(G)-1}{2}=\frac{k\g(G)}{2}-\frac{k-2}{2}< \frac{k\g(G)}{2},
$$
which again implies the desired inequality.

\bigskip

Now we describe the extremal graphs, i.e.~graphs $G$ with $\frac{i(G)}{\gamma(G)}=\frac k2$.
Since $\frac{i(G)}{\gamma(G)}<\frac k2$ if there are less than $\gamma(G)/2$
edges in $G[A]$ (see calculations in Case $2$) the extremal graphs
can be obtained only if there are at least $\gamma(G)/2$ edges in $G[A]$.
In fact, $G[A]$ must have exactly $\gamma(G)/2$ edges, i.e.~$s=\gamma(G)/2$,
since otherwise we get $e<(k-1)\gamma(G)$ which implies
$n<k\gamma(G)$ and consequently $\frac{i(G)}{\gamma(G)}<\frac k2$.

Furthermore, we will show that $G[A]$ is a collection of independent edges.
Suppose that $G[A]$ has exactly $t$ components.
Take one vertex from each component to form an independent set.
This set can be completed with at most $(k-1)(\gamma(G)-t)$ vertices of $B$
(that is, with at most $k-1$ vertices of $B$ for each of non-selected
vertices from $A$) to a maximal independent set $M$.
Recall that $M$ is an independent dominating set of $G$ and therefore $i(G)\leq |M|$.
If $t>\gamma(G)/2$, then 
$$
\begin{array}{ccccl}
i(G) & \leq & |M| & \leq & t+(k-1)(\g(G)-t)\\
&&                & =    & k\g(G)-\g(G)-t(k-2)\\
&&                & <    & k\g(G)-\g(G)-\frac{\g(G)}{2}(k-2)\\
&&						    & =    & \frac k2 \g(G),
\end{array}
$$
which implies $\frac{i(G)}{\gamma(G)}<\frac k2$, a contradiction.
Therefore $t\leq \gamma(G)/2$, i.e.~$t\leq s$. 
If a component of $G[A]$ contains a cycle, then $s>\g(G)-t$,
which together with $s=\g(G)/2$ implies that $t>\gamma(G)/2$, a contradiction.
Thus $G[A]$ is a forest.
Since $t\leq s$ we have $s=\g(G)-t\geq \g(G)-s=s$, and thus $t=s$.

If there is a component of $G[A]$ which contains at least $3$ vertices, then
this component contains two independent vertices (recall that $G[A]$ is a
forest).
So take two independent vertices from this component and one vertex from
every other component of $G[A]$.
This set contains $t+1$ independent vertices and analogously as above it can
be completed with at most $(k-1)(\g(G)-t-1)$ vertices of $B$ to a maximal
independent set $M$.
We get
$$
\begin{array}{ccccl}
i(G) & \leq & |M| & \leq & (t+1)+(k-1)(\g(G)-t-1)\\
&&                & =    & k\g(G)-\g(G)-(t+1)(k-2)\\
&&                & =    & \frac k2\g(G)-(k-2)\\
&&		  & <    & \frac k2 \g(G),
\end{array}
$$
which implies $\frac{i(G)}{\gamma(G)}<\frac k2$, a contradiction.
Thus, $G[A]$ is a collection of independent edges $\{u_1v_1,u_2v_2,\ldots ,u_sv_s\}$.

Our next aim is to show that for each $i\in \{1,2,\ldots ,s\}$ vertices $u_i$ and $v_i$ do not have a common neighbor.
If there is $z\in N(u_i)\cap N(v_i)$, then taking $u_i$ and one vertex from every other edge of $G[A]$ to an independent set, we can complete it to a maximal independent set $I$ with at most $k-2$ neighbors of $v_i$ in $V\setminus A$, and at most $k-1$ neighbors for each of non-selected vertices of $G[A]$. Therefore
$$i(G)\leq |I| \leq \frac{\g(G)}{2}+k-2+\left(\frac{\g(G)}{2}-1\right)(k-1)=\frac{\g(G)}{2}k-1<\frac{\g(G)}{2}k,$$

\noindent which again yields $\frac{i(G)}{\gamma(G)}<\frac k2$.
Thus, $N(u_i)\cap N(v_i)=\emptyset$.

Now suppose that there is $z\in N(u_1)$ which has a neighbor $w$ outside $N(u_1)\cup N(v_1)$.
We distinguish two cases.

\medskip

{\bf Case A:} $w\in A\setminus \{u_1,v_1\}$.
Without loss of generality we may assume that $w=v_2$.
Then put to an independent set $v_1$ and $v_2$, and complete it to an independent dominating set $I$ analogously as above. More precisely, $I$ contains $v_i$ for every $i\in \{1,2,\ldots,s\}$, at most $k-1$ neighbors of $u_i$ in $B$ for every $i\in \{2,3,\ldots,s\}$,
and at most $k-2$ neighbors of $u_1$ in $B$ since $v_2z\in E$ and $z\in N(u_1)$. Then
$$i(G)\leq  |I| \leq  \frac{\g(G)}{2}+k-2+\left(\frac{\g(G)}{2}-1 \right)(k-1)<\frac{\g(G)}{2}k,$$
which again implies $\frac{i(G)}{\gamma(G)}<\frac k2$, a contradiction.

\medskip

{\bf Case B:}
$w\in V\setminus (A\cup N(u_1)\cup N(v_1))$.
Without loss of generality we may assume that $w$ is a neighbor of $u_2$.
Let $I$ be the set consisting of $v_i$ for every $i\in \{1,2,\ldots,s\}$,
at most $k-1$ neighbors of $u_i$ in $B$ for every $i\in \{3,4,\ldots,s\}$, and 
at most $(k-1)+(k-1)-1$ vertices in $(N(u_1)\cup N(u_2))\setminus \{v_1,v_2\}$ since 
$u_1z,u_2w,zw\in E$. Note that $I$ is an independent dominating set. We derive
$$i(G)\leq  |I| \leq  \frac{\g(G)}{2}+\left(\frac{\g(G)}{2}-2 \right)(k-1)+2k-3<\frac{\g(G)}{2}k,$$
leading to a contradiction again.

\medskip

With this we have shown that no neighbor of $u_1$ in $B$ has a neighbor outside $N(u_1)\cup N(v_1)$.
Proceeding analogously for neighbors of $v_1$ we see that no vertex of $N(u_1)\cup N(v_1)$ has a neighbor outside $N(u_1)\cup N(v_1)$.
That is, $G[N(u_1)\cup N(v_1)]$ is a component of $G$,
and since $G$ is connected, we have $\gamma(G)=2$.

Now suppose that there are vertices $z_1,z_2\in N(u_1)\setminus\{v_1\}$ such that $z_1z_2\in E$.
Then put to an independent set $v_1$ and complete it to an independent dominating set of $G$ with neighbors of $u_1$ in $V\setminus A$.
Since $z_1z_2\in E$, we get an independent dominating set of size at most $1+(k{-}2)<k=\gamma(G)\frac k2$, which gives $\frac{i(G)}{\gamma(G)}<\frac k2$ again.
Thus, $N(u_1)\setminus \{v_1\}$ is an independent set in $G$.
In fact, $N(u_1)$ itself is an independent set of $G$, since we have already shown that $v_1$ has no neighbors in $N(u_1)\cap V\setminus A$.
Analogously it can be shown that $N(v_1)$ is an independent set, which means that $G$ is a $k$-regular graph with $2k$ vertices, and two independent sets $N(u_1)$ and $N(v_1)$ of size $k$.
Consequently, $G$ is $K_{k,k}$.
\end{proof}

\vskip 1pc \noindent{\bf Acknowledgements.} The first author
acknowledges partial support by Slovak research grants VEGA
1/0142/17, VEGA 1/0238/19, APVV-15-0220 and APVV-17-0428.
The research was partially supported also by Slovenian research agency
ARRS, programs no.\ P1--0383 and J1--1692.

\end{document}